\definecolor{mygreen}{RGB}{63,126,28}
\definecolor{myyellow}{RGB}{102,102,17}
    \pgfplotsset{
        cycle list/.define={my marks}{
            every mark/.append style={solid,fill=\pgfkeysvalueof{/pgfplots/mark list fill}},mark=*\\
            every mark/.append style={solid,fill=\pgfkeysvalueof{/pgfplots/mark list fill}},mark=square*\\
            every mark/.append style={solid,fill=\pgfkeysvalueof{/pgfplots/mark list fill}},mark=triangle*\\
            every mark/.append style={solid,fill=\pgfkeysvalueof{/pgfplots/mark list fill}},mark=diamond*\\
        },
    }
\crefname{hypothesis}{Hypothesis}{Hypotheses}
\title{A Robust Algebraic Multilevel Domain Decomposition Preconditioner For Sparse Symmetric Positive Definite Matrices\thanks{Submitted to the editors \today.}}
\author{Hussam Al Daas\thanks{STFC Rutherford Appleton Laboratory, Harwell Campus, Didcot, Oxfordshire, OX11 0QX, UK 
  (\email{hussam.al-daas@stfc.ac.uk}).}
  \and Pierre Jolivet\thanks{CNRS, ENSEEIHT, 2 rue Charles Camichel, 31071 Toulouse Cedex 7, France (\email{pierre.jolivet@enseeiht.fr}).}
}
\newcommand{\R}{\mathbb{R}}
\newcommand{\epart}[1]{\widetilde{\Omega}_{#1}}
\newcommand{\en}{\widetilde{n}}
\newcommand{\eres}[1]{\widetilde{R}_{#1}}
\newcommand{\erest}[1]{\widetilde{R}^\top_{#1}}
\newcommand{\part}[1]{\Omega_{#1}}
\newcommand{\res}[1]{R_{#1}}
\newcommand{\rest}[1]{R^\top_{#1}}
\newcommand{\Z}[1]{Z_{#1}}
\newcommand{\nomega}[1]{n_{#1}}
\newcommand{\schwarz}[1]{M^{-1}_{\text{\tiny{#1}}}}
\pgfplotsset{compat=newest}
\pgfplotsset{colormap={paraview}{rgb(0cm)=(0.278431,0.278431,0.858824) rgb(0.1428571429cm)=(0,0,0.360784) rgb(0.2857142857cm)=(0,1,1) rgb(0.4285714286cm)=(0,0.501961,0) rgb(0.5714285714cm)=(1,1,0) rgb(0.7142857143cm)=(1,0.380392,0) rgb(0.8571428571cm)=(0.419608,0,0) rgb(1cm)=(0.878431,0.301961,0.301961)}}
\pgfplotsset{colormap={parabin}{rgb(0cm)=(0.278431,0.278431,0.858824) rgb(0.5cm)=(0.878431,0.301961,0.301961) rgb(1cm)=(0.878431,0.301961,0.301961)}}
\begin{document}

\maketitle

\begin{abstract}
 Domain decomposition (DD) methods are widely used as preconditioner techniques. Their effectiveness
 relies on the choice of a locally constructed coarse space. Thus far, this construction was mostly
 achieved using non-assembled matrices from discretized partial differential
 equations (PDEs). Therefore, DD methods were mainly successful when solving
    systems stemming from PDEs. In
 this paper, we present a fully algebraic multilevel DD method where the
 coarse space can be constructed locally and efficiently without any
 information besides the coefficient matrix. The condition number of the
 preconditioned matrix can be bounded by a user-prescribed number. Numerical
 experiments illustrate the effectiveness of the preconditioner on a range
 of problems arising from different applications.
\end{abstract}

\begin{keywords}
  Algebraic domain decomposition, multilevel preconditioner, overlapping Schwarz method, sparse linear system.
\end{keywords}


\section{Introduction}
\label{sec:introduction}
We are interested in solving the linear system of equations
\begin{equation*}
  Ax=b,
\end{equation*}
where $A\in\R^{n\times n}$ is a sparse symmetric positive definite (SPD) matrix and $b\in\R^n$ is the right-hand side.
On the one hand, despite their accuracy, direct methods \cite{DufER17} that are based on matrix factorizations become memory
and computationally demanding for large-scale problems. Furthermore, establishing a high level of concurrency in their algorithm
is challenging, which limits the effectiveness of their parallelization with many processing units, e.g., thousands of MPI processes.
On the other hand, iterative methods, such as Krylov subspace methods, are attractive as they require less
memory resources and parallelizing them is easier. However, their convergence depends on the coefficient
matrix $A$, the initial guess $x_0$, and the right-hand side $b$. More precisely, the error at iteration $k$ of
the conjugate gradient method \cite{HesS52} satisfies
\[
 \|x_k - x_\star\|_A \le 2 \|x_0 - x_\star\|_A \left(\frac{\sqrt{\kappa_2(A)} - 1}{\sqrt{\kappa_2(A)} + 1}\right)^k,
\]
where $x_\star$ is the exact solution and $\kappa_2(A)$ is the spectral condition number of $A$.
Therefore, iterative methods are usually combined with preconditioners that modifies the properties
of the linear system such that the convergence rate of the methods is improved.
A variety of preconditioning techniques have been proposed in the literature, see the recent survey \cite{PeaP21} and 
references therein.
We focus in this work on preconditioners for SPD matrices.
In terms of construction type, these preconditioners can be split into two categories.
(1) {\it Algebraic} preconditioners: those do not require information from the problem besides the linear system,
and their construction relies only on $A$ and $b$ \cite{AldRS21,HigM19,LiXS16,Not10,Saa03}. (2) {\it Analytic} preconditioners: in order to construct
them, more information from the origin of the linear system, e.g., matrix assembly procedure, is required \cite{JonVVK09,JonVVS13,SpiDHNPS14}.
Inferring how preconditioners modify the spectrum of iteration matrices provides another way to classify them.
Again, two categories exist. (1) {\it One-level} preconditioners: those mostly rely on incomplete matrix
factorizations, matrix splitting methods, approximate sparse inverse methods, and Schwarz methods \cite{Saa03}.
One-level preconditioners usually bound from above the largest eigenvalue of the preconditioned matrix.
(2) {\it Two-level} and {\it multilevel} preconditioners: those are usually a combination of a one-level method and a coarse
space correction. While the one-level part can bound from above the largest eigenvalue, the coarse space is used
to bound from below the smallest eigenvalue such that the condition number of the preconditioned matrix is bounded
\cite{AldG19,AldGJT19,AldJS21,DolJN15,GanL17,GouS21,HeiHK20,HeiKKRW20,KlaKR16,KlaRR15,LiXS16,doi:10.1137/15M1025785,SpiR13,TanNVE09}.

When it comes to overlapping DD, most one-level preconditioners and a few
two-level/multilevel preconditioners are algebraic, while most two-level
preconditioners are analytic.
On the one hand, analytic two-level/multilevel preconditioners construct the coarse space efficiently without requiring computations
involving the global matrix. 
On the other hand, existing algebraic two-level/multilevel preconditioners still require global computations
involving the matrix $A$ that limit the setup scalability \cite{AldG19,GouS21}.
Furthermore, certain algebraic two-level preconditioners require complicated operations that may not be easy to parallelize.
Therefore, we focus in this paper on two-level/multilevel preconditioners where the coarse space can be constructed locally.
Certain algebraic multigrid (AMG) methods are examples of these preconditioners~\cite{Not10}. Note that several AMG methods
require unassembled matrices or the near-nullspace of the global matrix, which is known in some applications~\cite{ChaFHJMMRV03,TamJM15}.
One could argue that these methods are thus not purely algebraic.
Furthermore, their effectiveness has been proved only for certain classes of matrices.
An algebraic two-level preconditioner for the normal matrix equations was recently proposed in~\cite{AldJS21}.

In \cite{AldG19}, the authors presented an algebraic framework to construct robust coarse spaces and characterized
a class of local symmetric positive semi-definite (SPSD) matrices that allows to construct such coarse spaces efficiently.
Since then, there have been several attempts to construct algebraic two-level preconditioners with a locally computed coarse space
that are theoretically effective on any sparse SPD matrix, see, e.g., \cite{GouS21} and references therein.
Starting off with the subdomain matrices of $A$, the authors in \cite{GouS21} define an auxiliary matrix $A_+$ such that
$A- A_+$ is low-rank and a local SPSD splitting for $A_+$ is easily obtained.
A robust algebraic two-level preconditioner for $A$ is then derived by a low-rank update of the robust algebraic two-level 
preconditioner of $A_+$.
Despite the fact that the preconditioner proposed in \cite{GouS21} is fully algebraic, using it in practice may not be
very attractive since the low-rank update requires the solution of linear systems with $A_+$ involving a large number of
right-hand sides that is nearly equal to the size of the coarse space of $A_+$, which is prohibitive for large number of subdomains.
Therefore, we believe that the question of finding efficient locally constructed coarse spaces is still open.

When information such as the near-nullspace or the subdomain non-assembled matrices are available, analytic AMG or DD preconditioners are optimal.
The preconditioner presented in this paper should be used when a robust black-box solver is needed.

The manuscript is organized as follows.
We introduce the notation and review the algebraic DD framework in \Cref{sec:DD}.
\Cref{sec:splitting} presents our main contribution in finding local SPSD splitting matrices associated with each subdomain
fully algebraically in an inexpensive way and starting from local data. These matrices will be used to construct a robust
two-level Schwarz preconditioner. Then, we briefly discuss the straightforward extension of our approach to a 
multilevel preconditioner. Afterwards, we present in \Cref{sec:numerical_experiments} numerical experiments on problems 
arising from different engineering applications. Concluding remarks and future lines of research are given in \Cref{sec:conclusion}.

\paragraph{Notation}
We end our introduction by defining notations that will be used in this paper.
Let $1 \le n \le m$ and let 
$B \in \mathbb{R}^{m \times n}$. 
Let $S_1 \subset \llbracket 1, m \rrbracket$ and $S_2 \subset \llbracket 1, n\rrbracket$ be two sets of integers.
$B(S_1, :)$ is the submatrix of $B$ formed by the rows whose indices belong to
$S_1$ and $B(:, S_2)$ is the submatrix of $B$ formed by the columns whose indices belong to $S_2$.
The matrix $B(S_1,S_2)$ is formed by taking the rows whose indices belong
to~$S_1$ and only retaining the columns whose indices belong to $S_2$.
The concatenation
of any two sets of integers $S_1$ and $S_2$ is represented by $[S_1, S_2]$. Note that the order of the concatenation is
important. The set of the first $p$ positive integers is denoted by $\llbracket 1,p\rrbracket$. 
The identity matrix of size $n$ is denoted by $I_n$.

\section{Domain decomposition}
\label{sec:DD}
Throughout this section, we assume that $C$ is a general $n \times n$ sparse SPD matrix.
Let the nodes $V$ in the corresponding adjacency graph ${\cal G}(C)$ be numbered from $1$ to $n$. 
A graph partitioning algorithm can be used to split $V$ into $N \ll n$ disjoint subsets $\part{Ii}$ ($1\le i \le N$)
of size $\nomega{Ii}$. These sets are called  nonoverlapping subdomains.
\subsection{Abstract setting for two-level overlapping Schwarz methods}
Defining first a one-level Schwarz preconditioner requires overlapping subdomains.
Let $\part{\Gamma i}$ be the subset of size  $\nomega{\Gamma i}$ of  nodes that are distance one in ${\cal G}(C)$ from 
the nodes in $\part{Ii}$  ($1\le i \le N$).
The overlapping subdomain $\part{i}$ is defined to be $\part{i}=[\part{Ii}, \part{\Gamma i}]$,
with size $\nomega{i} = \nomega{\Gamma i} + \nomega{Ii}$.
The complement of $\part{i}$ in $\llbracket1,n\rrbracket$ is denoted by $\part{\text{c}_{\Gamma} i}$.

Associated with $\part{i}$ is a  restriction (or projection) matrix 
$\res{i}\in \mathbb{R}^{n_i \times n}$ given by
$\res{i} = I_n(\part{i},:)$.
$\res{i}$ maps from the global domain to subdomain $\part{i}$. Its transpose
$\rest{i}$ is a prolongation matrix that maps from subdomain $\part{i}$ to the global domain.

The theory in this paper requires a decomposition of the graph of $C^2$.
Hence, in addition to the previous subsets, we define the following ones.
We denote $\part{\Delta i}$ the subset of size  $\nomega{\Delta i}$ containing nodes that are not in $\part{Ii}$ and distance one in ${\cal G}(C)$ from
the nodes in $\part{\Gamma i}$  ($1\le i \le N$).
The extended overlapping subdomain $\epart{i}$ is defined to be $\epart{i} = [\part{Ii}, \part{\Gamma i}, \part{\Delta i}]$ and it is of size $\en_i$.
We denote the complement of $\epart{i}$ in $\llbracket1,n\rrbracket$ by $\part{\text{c}_{\Delta} i}$.
Associated with $\epart{i}$ is a  restriction matrix 
$\eres{i}\in \mathbb{R}^{\en_i \times n}$ given by
$\eres{i} = I_n(\part{i},:)$.
$\eres{i}$ maps from the global domain to the extended overlapping subdomain $\epart{i}$. Its transpose
$\erest{i}$ is a prolongation matrix that maps from the extended overlapping subdomain $\epart{i}$ to the global domain.

The {\em one-level additive Schwarz preconditioner} \cite{DolJN15} is defined to be
\begin{equation*}
  \schwarz{ASM} = \sum_{i=1}^N \rest{i} C_{ii}^{-1} \res{i},  \hspace{0.5cm} C_{ii} = \res{i} C \rest{i}.
\end{equation*}
Applying this preconditioner to a vector involves solving concurrent local
problems in the overlapping subdomains.
Increasing $N$ reduces the sizes $n_i$ of the overlapping subdomains, 
leading to smaller local problems and faster computations.
However, in practice, the preconditioned system using $\schwarz{ASM}$ may not be well-conditioned, inhibiting convergence
of the iterative solver. In fact, the local nature of this preconditioner can lead to a deterioration 
in its effectiveness as the number of subdomains increases because of the lack of global 
information from the matrix~$C$~\cite{DolJN15,GanL17}.
To maintain robustness with respect to $N$, 
a coarse space is added 
to the preconditioner (also known as second-level correction) that includes global information.

Let $0 < n_C \ll n$. If $\res{0} \in \mathbb{R}^{n_C\times n}$ is of full row rank, 
the {\em two-level additive Schwarz preconditioner} \cite{DolJN15} is defined to be 
\begin{equation}
  \label{eq:two_level_schwarz}
  \schwarz{additive} = \sum_{i=0}^N \rest{i} C_{ii}^{-1} \res{i} = \rest{0} C_{00}^{-1} \res{0} + \schwarz{ASM}, \hspace{0.5cm} C_{00} = \res{0} C \rest{0}.
\end{equation}
Observe that, since $C$ and $\res{0}$ are of full rank, $C_{00}$ is also of full rank.
For any full rank $R_0$, it is possible
to cheaply obtain upper bounds on the largest eigenvalue 
of the preconditioned matrix, independently of $n$ and $N$ \cite{AldG19}.
However, bounding the smallest eigenvalue is highly dependent on  $R_0$.
Therefore, the choice of $R_0$ is key to 
obtaining a well-conditioned system and building efficient two-level Schwarz preconditioners.
Two-level Schwarz preconditioners have been used to solve a large class of systems arising from
a range of engineering applications (see, for example,
\cite{HeiHK20,JolRZ21,KonC17,MarCJNT20,SmiBG96,VanSG09} and references therein).

Following \cite{AldG19}, we denote by $D_i \in \mathbb{R}^{n_i\times n_i}$ ($1\le i \le N$) 
any non-negative diagonal matrices such that
\begin{equation*}
  \sum_{i=1}^N \rest{i} D_{i} \res{i} = I_n.
\end{equation*}
We refer to $\left(D_i\right)_{1 \le i \le N}$ as an \emph{algebraic partition of unity}.
In \cite{AldG19}, Al Daas and Grigori show how to select local subspaces $\Z{i} \in \R^{n_i \times p_i}$ 
with $p_i \ll n_i$ ($1 \le i \le N$)   
such that, if $\rest{0}$ is defined to be $\rest{0} = [\rest{1}D_1\Z{1},  \ldots,  \rest{N}D_N\Z{N}]$,
the spectral condition number of the preconditioned matrix $\schwarz{additive} C$ is bounded from above
independently of $N$ and $n$.

\subsection{Algebraic local SPSD splitting of an SPD matrix}
We now recall the definition of an algebraic local SPSD splitting of an SPD matrix
given in \cite{AldG19} and generalized in \cite{AldGJT19}.

An {\em algebraic  local SPSD splitting} of the SPD matrix $C$ with respect to the $i$-th  subdomain is defined to be any 
SPSD matrix $\widetilde{C}_i \in \R^{n \times n}$ that satisfies the following
\begin{align*}
  &0 \le u^\top \widetilde{C}_i u \le u^\top C u, \text{\quad for all } u \in \R^n,\\
  &\res{\text{c}_\Gamma i} \widetilde{C}_i = 0.
\end{align*}
We denote the nonzero submatrix of $\widetilde{C}_i$ by $\widetilde{C}_{ii}$ so that 
$$\widetilde{C}_i = \rest{i} \widetilde{C}_{ii} \res{i}.$$
Associated with the local SPSD splitting matrices, we define a multiplicity constant~$k_m$ that satisfies the inequality
\begin{equation}
  \label{eq:sum C-tildeC_ge_0}
  0 \le \sum_{i=1}^N u^\top \widetilde{C}_i u \le k_m u^\top C u, \text{\quad for all } u \in \R^n.
\end{equation}
Note that, for any set of SPSD splitting matrices, $k_m \le N$.

The main motivation for defining splitting matrices is to 
find local seminorms that are bounded from above by the $C$-norm.
These seminorms will be used to determine a subspace that contains 
the eigenvectors of $C$ associated with its smallest eigenvalues.

\section{Local SPSD splitting matrices}
\label{sec:splitting}
In this section we show how to construct local SPSD splitting matrices of a sparse SPD matrix efficiently
using only local subdomain information.
\subsection{From normal equations matrices to general SPD matrices}
In \cite{AldJS21}, the authors presented how to compute local SPSD splitting matrices for the normal equations matrix $C = B^\top B$
where $B\in\R^{m\times n}$.
Considering the case $B=A$, we have $C = A^2$. Thus, provided the theory developed in \cite{AldJS21}, we can compute
local SPSD splitting matrices of $A^2$ efficiently.
Using the permutation matrix $P_i = I(\part{Ii},\part{\Gamma i}, \part{\Delta i}, \part{\text{c}_\Delta i}] ,:)$, we can write
\begin{equation*}
  P_i A P_i^\top = \begin{pmatrix}
    A_{Ii}          & A_{I \Gamma i}      &                     &\\
    A_{\Gamma I i}  & A_{\Gamma i}        & A_{\Gamma \Delta i} & \\
                    & A_{\Delta \Gamma i} & A_{\Delta i}        & A_{\Delta \text{c}_{\Delta} i} \\
                    &                     & A_{\text{c}_{\Delta} \Delta i}      & A_{\text{c}_{\Delta} i}
  \end{pmatrix},
\end{equation*}
and 
\begin{equation*}
  \widetilde{C}_{i} = \erest{i} X_i^\top X_i \eres{i}
\end{equation*}
is a SPSD splitting of $A^2$, where $X_i$ is given as
\begin{equation}
    \label{eq:Xi}
  X_i = \res{i} A \erest{i} = \begin{pmatrix}
    A_{Ii}          & A_{I \Gamma i}      &                     \\
    A_{\Gamma I i}  & A_{\Gamma i}        & A_{\Gamma \Delta i}  \\
  \end{pmatrix}.
\end{equation}
\begin{remark}\label{remark:sub}
    All terms from~\cref{eq:Xi} stem from the original coefficient matrix $A$,
    in the sense that there is no connection with the underlying discretization
    scheme or matrix assembly procedure. In a parallel computing context, e.g.,
    if $A$ is distributed following a contiguous one-dimensional row
    partitioning among MPI processes, all terms may be retrieved using
    peer-to-peer communication between neighboring processes.
\end{remark}
\Cref{lemma:splitting_A2} demonstrates how to obtain a local SPSD splitting of $A$ with respect to the extended overlapping subdomains given a SPSD splitting of $A^2$.
\begin{lemma}
  \label{lemma:splitting_A2}
  Let $\widetilde{C}_i$ be a local SPSD splitting of $C = A^2$, and let $\underline{\widetilde{A}}_i$ be the square root SPSD matrix of $\widetilde{C}_i$ such that $ \underline{\widetilde{A}}_i^2 =  \widetilde{C}_i$. Then, $ \underline{\widetilde{A}}_i$ is a local SPSD splitting of $A$ with respect to the extended overlapping subdomain $\epart{i}$. 
\end{lemma}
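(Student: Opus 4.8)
The plan is to verify the three defining properties of an algebraic local SPSD splitting of $A$ with respect to $\epart{i}$: that $\underline{\widetilde{A}}_i$ is SPSD, that $0 \le u^\top \underline{\widetilde{A}}_i u \le u^\top A u$ for all $u \in \R^n$, and that $\res{\text{c}_\Delta i} \underline{\widetilde{A}}_i = 0$. The first is immediate: by definition the square root $\underline{\widetilde{A}}_i$ of the SPSD matrix $\widetilde{C}_i$ is itself SPSD, so $u^\top \underline{\widetilde{A}}_i u \ge 0$ for every $u$. The remaining two properties carry the content of the lemma, and I would treat them separately.

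For the upper bound, I would rephrase the hypothesis in the Loewner order. The inequality $0 \le u^\top \widetilde{C}_i u \le u^\top A^2 u$ for all $u$ is exactly $0 \preceq \widetilde{C}_i \preceq A^2$, and since $\underline{\widetilde{A}}_i^2 = \widetilde{C}_i$ this reads $\underline{\widetilde{A}}_i^2 \preceq A^2$. The key step is to pass from this inequality between squares to the corresponding inequality between the matrices themselves. This is precisely the L\"owner--Heinz inequality: the map $t \mapsto \sqrt{t}$ is operator monotone on $[0,\infty)$, so $0 \preceq P \preceq Q$ implies $P^{1/2} \preceq Q^{1/2}$. Applying this with $P = \underline{\widetilde{A}}_i^2$ and $Q = A^2$, and using that $A$ is SPD so that $(A^2)^{1/2} = A$, yields $\underline{\widetilde{A}}_i = (\underline{\widetilde{A}}_i^2)^{1/2} \preceq (A^2)^{1/2} = A$, which is the desired seminorm bound.

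For the support condition, I would use that a symmetric matrix and its SPSD square root share the same range. Writing an eigendecomposition $\widetilde{C}_i = Q \Lambda Q^\top$, the square root is $\underline{\widetilde{A}}_i = Q \Lambda^{1/2} Q^\top$, and since $\lambda > 0 \iff \sqrt{\lambda} > 0$ the two matrices have identical column spaces, $\range(\underline{\widetilde{A}}_i) = \range(\widetilde{C}_i)$. By construction $\widetilde{C}_i = \erest{i} X_i^\top X_i \eres{i}$ is supported on the extended subdomain $\epart{i}$, so every column of $\widetilde{C}_i$, and hence every vector in $\range(\widetilde{C}_i)$, is supported on the indices of $\epart{i}$; the same therefore holds for $\underline{\widetilde{A}}_i$. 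Because $\underline{\widetilde{A}}_i$ is symmetric, its rows indexed by the complement $\part{\text{c}_\Delta i}$ vanish, which is exactly $\res{\text{c}_\Delta i} \underline{\widetilde{A}}_i = 0$.

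The main obstacle is the upper-bound step, and it is worth flagging what makes it delicate: one cannot argue naively that $\underline{\widetilde{A}}_i^2 \preceq A^2$ implies $\underline{\widetilde{A}}_i \preceq A$ by ``taking square roots,'' since $\underline{\widetilde{A}}_i$ and $A$ need not commute and the map $t \mapsto t^2$ is \emph{not} operator monotone. The argument genuinely relies on the operator monotonicity of the square root, which is the only nonelementary ingredient; the support condition and the SPSD property are then routine.
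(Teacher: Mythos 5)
Your proof is correct, and its skeleton (SPSD property, seminorm upper bound, support condition) matches what the definition requires. The one substantive difference lies in how the key implication $\underline{\widetilde{A}}_i^2 \preceq A^2 \Rightarrow \underline{\widetilde{A}}_i \preceq A$ is justified: you invoke the L\"owner--Heinz theorem (operator monotonicity of $t\mapsto\sqrt{t}$) as a black box, whereas the paper proves exactly this implication from scratch. Its argument is elementary: for symmetric $P=\underline{\widetilde{A}}_i$ and $Q=A$ one has $u^\top(Q^2-P^2)u=u^\top(Q+P)(Q-P)u$ for all $u$, because the commutator $PQ-QP$ is antisymmetric and drops out of the quadratic form; writing the SPD matrix $Q+P$ as $W^\top W$ then shows that the similar matrix $W(Q-P)W^{-1}$ has nonnegative quadratic form, hence nonnegative real eigenvalues, hence $Q-P$ --- which shares those eigenvalues and is symmetric --- is SPSD. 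So the paper's route buys self-containedness at the cost of a few extra lines, while yours is shorter but leans on a nontrivial cited theorem; you are right to flag that the naive ``take square roots'' step fails without operator monotonicity, and the paper's proof is in effect an inline proof of precisely the special case of L\"owner--Heinz that is needed. On the support condition, your argument (the SPSD square root has the same range as $\widetilde{C}_i$, which is supported on $\epart{i}$, and symmetry then kills the rows indexed by $\part{\text{c}_\Delta i}$) is a correct and usefully more explicit version of the paper's one-sentence remark that the locality of $\underline{\widetilde{A}}_i$ stems from that of $\widetilde{C}_i$.
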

\begin{proof}
  First, observe that for any vector $u\in\R^n$,
  \[
    u^\top (A^2 - \underline{\widetilde{A}}_i^2) u = u^\top (A+\underline{\widetilde{A}}_i) (A-\underline{\widetilde{A}}_i) u.
  \]
  Since $A+ \underline{\widetilde{A}}_i$ is SPD, we can write $A+\underline{\widetilde{A}}_i = W_i^\top W_i$, and we have
  \begin{align*}
    u^\top W_i (A - \underline{\widetilde{A}}_i) W_i^{-1} u &= u^\top W_i^{-\top} W_i^\top W_i (A-\underline{\widetilde{A}}_i) W_i^{-1} u\\
                                                &= v^\top W_i^\top W_i (A-\underline{\widetilde{A}}_i) v\\
                                                &= v^\top (A+\underline{\widetilde{A}}_i) (A-\underline{\widetilde{A}}_i) v\\
                                                &= v^\top (A^2 - \underline{\widetilde{A}}_i^2) v\\
                                                &\ge 0,
  \end{align*}
  where $v = W_i^{-1} u$.
  Since $W_i (A - \underline{\widetilde{A}}_i) W_i^{-1}$ and $A - \underline{\widetilde{A}}_i$ have the same eigenvalues, we conclude that
  $A - \underline{\widetilde{A}}_i$ is SPSD.
  The locality of $ \underline{\widetilde{A}}_i$ stems from the locality of $ \widetilde{C}_i$.
\end{proof}

We note that the SPSD splitting $\underline{\widetilde{A}}_i$ obtained from the SPSD splitting of $A^2$ is local with respect
to the extended overlapping subdomain $\epart{i}$.
A Schur complement technique can be applied to obtain the locality to the subdomain $\part{i}$.
\Cref{lemma:splitting_A} presents how to obtain a local SPSD splitting matrix of $A$ with respect to the subdomain $\part{i}$ from the local
SPSD splitting of $A$ with respect to the extended overlapping subdomain $\epart{i}$.
\begin{lemma}
  \label{lemma:splitting_A}
  Let $\underline{\widetilde{A}}_i = \erest{i} \underline{\widetilde{A}}_{ii} \eres{i}$ be a local SPSD splitting of $A$ with respect to the extended overlapping subdomain $\epart{i}$.
  Let $\underline{\widetilde{A}}_{ii}$ be written as a $(2,2)$ block matrix such that the $(1,1)$ block corresponds to the overlapping subdomain $\part{i}$ and the $(2,2)$ block corresponds to $\part{\Delta i}$, i.e.,
  \[
    \underline{\widetilde{A}}_{ii} = \begin{pmatrix} X_{i,11} & X_{i,12} \\ X_{i,21} & X_{i,22} \end{pmatrix},
  \]
  and let 
  \begin{equation}
   \label{eq:tildeA}
   \widetilde{A}_{ii} = X_{i,11} - X_{i,12} X_{i,22}^{-1} X_{i,21},
  \end{equation}
  where we assume that $X_{i,22}$ is SPD. Then,
  $\widetilde{A}_{i} = \rest{i} \widetilde{A}_{ii} \res{i}$ is a SPSD splitting of $A$ with respect to the subdomain $\part{i}$.
\end{lemma}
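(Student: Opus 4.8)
The plan is to verify directly the two defining properties of a local SPSD splitting of $A$ with respect to the subdomain $\part{i}$: the locality condition $\res{\text{c}_\Gamma i}\widetilde{A}_i = 0$, and the sandwich inequality $0 \le u^\top \widetilde{A}_i u \le u^\top A u$ for all $u \in \R^n$. The locality is immediate: since $\widetilde{A}_i = \rest{i}\widetilde{A}_{ii}\res{i}$ is assembled through the prolongation $\rest{i}$, its nonzero entries are confined to $\part{i}$, so the rows indexed by the complement $\part{\text{c}_\Gamma i}$ vanish. This settles the structural requirement and reduces the proof to the two scalar inequalities, which I will handle by exploiting the relation $u^\top \widetilde{A}_i u = (\res{i}u)^\top \widetilde{A}_{ii}(\res{i}u)$.

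For the lower bound I would show that $\widetilde{A}_{ii}$ is itself SPSD, which immediately gives $u^\top\widetilde{A}_i u \ge 0$. Here I invoke the standard fact that the Schur complement of an SPSD matrix taken with respect to an SPD principal block is SPSD. Indeed, because $\underline{\widetilde{A}}_i$ is a splitting with respect to $\epart{i}$, it is SPSD, hence so is its nonzero principal block $\underline{\widetilde{A}}_{ii}$; combined with the hypothesis that $X_{i,22}$ is SPD, this makes $\widetilde{A}_{ii} = X_{i,11} - X_{i,12}X_{i,22}^{-1}X_{i,21}$ the Schur complement of $X_{i,22}$ in an SPSD matrix, and therefore SPSD.

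The upper bound is the crux, and I would derive it from the variational characterization of the Schur complement. Writing $\eres{i}y = (\res{i}y;\, y_\Delta)$ with $y_\Delta$ the components on $\part{\Delta i}$, completing the square in the quadratic form of $\underline{\widetilde{A}}_{ii}$ shows that, with $\res{i}u$ fixed,
\[
  (\res{i}u)^\top \widetilde{A}_{ii} (\res{i}u) = \min_{y_\Delta} \begin{pmatrix} \res{i}u \\ y_\Delta \end{pmatrix}^\top \underline{\widetilde{A}}_{ii} \begin{pmatrix} \res{i}u \\ y_\Delta \end{pmatrix},
\]
the minimizer being $y_\Delta = -X_{i,22}^{-1}X_{i,21}\res{i}u$. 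Since $\underline{\widetilde{A}}_i$ is supported on $\epart{i}$, its quadratic form $y^\top\underline{\widetilde{A}}_i y$ depends on $y$ only through $\eres{i}y$, so the right-hand side equals the minimum of $y^\top \underline{\widetilde{A}}_i y$ over all global vectors $y$ constrained by $\res{i}y = \res{i}u$. The decisive observation is that $u$ itself is an admissible competitor in this minimization, whence $u^\top\widetilde{A}_i u \le u^\top \underline{\widetilde{A}}_i u$; chaining this with the splitting inequality $u^\top \underline{\widetilde{A}}_i u \le u^\top A u$, which holds by hypothesis since $\underline{\widetilde{A}}_i$ is a local SPSD splitting of $A$, closes the estimate.

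The main obstacle I anticipate is making the variational step precise: one must argue carefully that minimizing only over the $\part{\Delta i}$-components of $y$ coincides with minimizing over all of $\R^n$ subject to $\res{i}y$ fixed, and this is exactly the point where the locality of $\underline{\widetilde{A}}_i$ (its vanishing outside $\epart{i}$) is essential. Once this identity is established, substituting the particular competitor $y = u$ delivers the bound with no further computation.
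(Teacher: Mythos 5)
Your proof is correct and follows essentially the same route as the paper's: both arguments rest on the fact that the Schur complement of the SPD block $X_{i,22}$ in the SPSD matrix $\underline{\widetilde{A}}_{ii}$ is SPSD (giving the lower bound) and on comparing $(\res{i}u)^\top\widetilde{A}_{ii}(\res{i}u)$ with $(\eres{i}u)^\top\underline{\widetilde{A}}_{ii}(\eres{i}u)$ (giving the upper bound). The paper obtains the latter comparison by writing $\underline{\widetilde{A}}_{ii}$ as the Schur complement plus an explicit SPSD remainder, which is exactly the completed square underlying your variational characterization, so the two presentations are equivalent.
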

\begin{proof}
  We have
\begin{align*}
  \underline{\widetilde{A}}_{ii} 
  &= \begin{pmatrix}
       X_{i,11} & X_{i,12} \\
       X_{i,21} & X_{i,22}
     \end{pmatrix}\\
  &= \begin{pmatrix}
    X_{i,11} - X_{i,12} X_{i,22}^{-1} X_{i,21} &  \\
    &
  \end{pmatrix} + 
  \begin{pmatrix}
    X_{i,12} X_{i,22}^{-1} X_{i,21} & X_{i,12}  \\
    X_{i,21} & X_{i,22}
  \end{pmatrix}.
\end{align*}
  Since $X_{i,22}$ is SPD and $\underline{\widetilde{A}}_{ii}$ is SPSD, $X_{i,11} - X_{i,12} X_{i,22}^{-1} X_{i,21}$ is SPSD.
Therefore,
  \begin{align*}
    0 \le u^\top \widetilde{A}_i u &=   u^\top \rest{i} \widetilde{A}_{ii} \res{i} u\\
                                   &\le u^\top \erest{i} \underline{\widetilde{A}}_{ii} \eres{i} u\\
                                   &\le u^\top A u.
  \end{align*}

\end{proof}
\begin{remark}
 \label{remark:shift}
 Since the SPSD splitting will be used to construct a preconditioner, the assumption in \cref{lemma:splitting_A} that $X_{i,22}$ is SPD can be obtained by shifting its diagonal elements by a small value such as $\|X_{i,22}\|_2 \varepsilon$, where $\varepsilon$ is the floating-point machine precision.
One can also shift the diagonal values of the matrix $\underline{\widetilde{A}}_{ii}$ by a small value $\|\underline{\widetilde{A}}_{ii}\|_2 \varepsilon$ so that the Schur complement can be well defined.
\end{remark}
In the following section, we explain how to compute the local SPSD splitting matrices efficiently.
\subsection{Practical construction of local SPSD matrices\label{sec:practical}}
The construction of robust two-level overlapping Schwarz preconditioners is based on computing
the coarse space projection operator $\res{0}$. Using the local SPSD splitting matrices of $A$,
$\res{0}$ can be chosen as the matrix that spans the space
\[
  Z = \bigoplus_{i=1}^N \rest{i}D_i Z_i,
\]
where $Z_i$ is defined to be
\begin{equation}
  \label{eq:Zi}
  Z_i = \text{span}\{u \ | \ D_i A_{ii} D_i u = \lambda \widetilde{A}_{ii} u \text{, and } \lambda > 1/\tau\},
\end{equation}
where $\tau > 0$ is a user-specified number.
The condition number of the preconditioned matrix $\schwarz{additive} A$ is bounded from above by $ (k_c + 1) \left(2 + (2k_c + 1)\frac{k_m}{\tau}\right)$, 
where $k_c$ is the number of colors required to color the graph of $A$ such that any two neighboring subdomains have different colors and $k_m$ 
is the multiplicity constant that satisfies~\cref{eq:sum C-tildeC_ge_0}.

Solving the generalized eigenvalue problem in~\cref{eq:Zi} using iterative solvers such as the Krylov--Schur method~\cite{Ste02} requires solving linear systems of the form $\widetilde{A}_{ii} u = v$.
The matrix $ \widetilde{A}_{ii}$ is the Schur complement of the matrix $\underline{\widetilde{A}}_{ii} = \left(X_i^\top X_i\right)^{\frac{1}{2}}$,
where $X_i = \res{i} A \erest{i}$. 
Let $X_i = U_i \Sigma_i V_i^\top$ be the economic singular-value decomposition of $X_i$ and let $V_i^\perp$ be an orthonormal matrix whose
columns form a complementary basis of the columns of $V_i$, i.e., $[V_i, V_i^\perp]$ is an orthogonal matrix. 
Note that $V_i^\perp (V_i^\perp )^\top = I_{\tilde{n}_i} - V_i V_i^\top$.
Using \cref{remark:shift}, $\underline{\widetilde{A}}_{ii}$ can be chosen as 
\begin{align*}
 \underline{\widetilde{A}}_{ii} &= V_i \Sigma_i V_i^\top + \sigma_{1i} \varepsilon I_{\tilde{n}_i}\\
                                 &= V_i \Sigma_i V_i^\top + \sigma_{1i} \varepsilon [V_i, V_i^\perp] [V_i, V_i^\perp]^\top\\
                                 &= V_i (\Sigma_i+\sigma_{1i} \varepsilon I_{n_i}) V_i^\top + \sigma_{1i} \varepsilon V_i^\perp (V_i^\perp )^\top\\
                                 &= V_i (\Sigma_i+\sigma_{1i} \varepsilon I_{n_i}) V_i^\top + \sigma_{1i} \varepsilon (I_{\tilde{n}_i} - V_i V_i^\top),
\end{align*}
where $\sigma_{1i}$ is the largest singular value of $X_i$.
One way to solve the linear system $\widetilde{A}_{ii} u = v$ is thus to solve the augmented linear system
\[
  \underline{\widetilde{A}}_{ii} \begin{pmatrix} u \\ y\end{pmatrix} = \begin{pmatrix} v\\ 0\end{pmatrix}.
\]
Given the singular-value decomposition of $\underline{\widetilde{A}}_{ii}$, the solution $u$ can be obtained efficiently.
Indeed, the inverse of $\underline{\widetilde{A}}_{ii}$ is 
\begin{equation}\label{eq:invAii}
 \underline{\widetilde{A}}_{ii}^{-1} = V_i (\Sigma_i+\sigma_{1i}\varepsilon I_{n_i})^{-1} V_i^\top + \sigma_{1i}^{-1} \varepsilon^{-1} (I_{\tilde{n}_i} - V_i V_i^\top).
\end{equation}
In our current implementation, the singular-value decomposition is computed concurrently using LAPACK~\cite{lapack99}. This implies that the sparse matrix $X_i$, see~\cref{eq:Xi}, is converted to a dense representation. Then, $\underline{\widetilde{A}}_{ii}$ is never assembled, and instead, the action of its inverse is applied in a matrix-free fashion using~\cref{eq:invAii}. Since these operations are local to each subdomain, they remain tractable. However, it could be beneficial to leverage the lower memory-footprint of iterative sparse singular-value solvers, e.g., PRIMME\_SVDS~\cite{doi:10.1137/16M1082214}. To the best of our knowledge, no such solver may be used to retrieve the complete economic singular-value decomposition of a sparse matrix.

Since the construction of the two-level method is fully algebraic, one can successively apply the same approach on the coarse space matrix to obtain a
multilevel preconditioner in which, the condition number of each preconditioned matrix is bounded from above by a prescribed number.
Note that if the matrices $\widetilde{A}_{ii}$ for $i=1, \ldots, N$ are formed explicitly as in~\cref{eq:tildeA}, we can use the strategy that we proposed in
\cite{AldGJT19} to construct a multilevel preconditioner with the same properties.

\section{Numerical experiments}
\label{sec:numerical_experiments}
In this section, we present a variety of numerical experiments that show the effectiveness
and efficiency of the proposed preconditioner.
First, we compare it against state-of-the-art algebraic multigrid
preconditioners including AGMG~\cite{napov2012algebraic,Not10}, BoomerAMG~\cite{FalY02}, and GAMG~\cite{AdaBKP04}.
Then, we include numerical experiments where the proposed preconditioner is used to solve coarse problems
from other multilevel solvers, thus emphasizing the algebraic and robust traits of our method.
Except for AGMG which is used through its MATLAB interface, all these experiments are performed using PETSc~\cite{PETSc}. In particular, the proposed preconditioner is a natural extension of the PCHPDDM infrastructure~\cite{JolRZ21} which we use to solve the concurrent generalized eigenvalue problems from~\cref{eq:Zi} via SLEPc~\cite{HerRV05}, and then to define our multilevel preconditioner by selecting the appropriate local eigenmodes depending on the user-specified value of $\tau$. With respect to~\cref{remark:sub}, we use the PETSc routine \texttt{MatCreateSubMatrices}, see \url{https://petsc.org/release/docs/manualpages/Mat/MatCreateSubMatrices.html}. Instead of using $\schwarz{additive}$ as defined in~\cref{eq:two_level_schwarz}, we will use $\schwarz{deflated}$, defined as
\begin{equation*}
    \schwarz{deflated} = \rest{0} C_{00}^{-1} \res{0} + \schwarz{RAS}(I_n - C\rest{0} C_{00}^{-1} \res{0}),
\end{equation*}
where $\schwarz{RAS}$ is the well-known one-level restricted additive Schwarz method~\cite{CaiS99}. The choice of $\schwarz{deflated}$ over $\schwarz{additive}$ is motived by previous results from the literature~\cite{TanNVE09}, which exhibit better numerical property of the former over the latter.
\Cref{tab:data_set} presents the set of test matrices from the SuiteSparse Matrix Collection \cite{DavH11} that are used first. They represent a subset of the matrices from the collection which satisfy both criteria ``\emph{Special Structure} equal to \emph{Symmetric}'' and ``\emph{Positive Definite} equal to \emph{Yes}''. We highlight the fact that our proposed preconditioner can handle unstructured systems, not necessarily stemming from standard PDE discretization schemes, by displaying some nonzero patterns in~\cref{fig:pattern}.
\pgfplotstableread{table.dat}\loadedtable
\begin{table}
  \caption{Test matrices taken from the SuiteSparse Matrix Collection.}
  \centering
  \label{tab:data_set}
\pgfplotstabletypeset[every head row/.style={before row=\hline,after row=\hline},
                      every last row/.style={after row=\hline},
                      every even row/.style={before row={\rowcolor[gray]{0.9}}},
                      columns={identifier,m,nnz,condest},sort,sort key=m,
                      columns/m/.style={int detect,column name=$n$,dec sep align},
                      columns/c/.style={int detect,column name=$n_C$,dec sep align},
                      columns/condest/.style={precision=1,column name=condest($A$)},
                      columns/nnz/.style={int detect,column name=nnz($A$),dec sep align},
                      columns/identifier/.style={string type},display columns/0/.style={column name=Identifier, column type={l|}}]\loadedtable
\end{table}
\begin{figure}
    \subfloat[s3rmt3m3, $n = \pgfmathprintnumber{5357}$]{
\begin{tikzpicture}
    \begin{axis}[enlargelimits=false, axis on top, axis equal image, width=6.1cm,yticklabels={,,},xticklabels={,,}]
\addplot graphics [xmin=1,xmax=2,ymin=1,ymax=2] {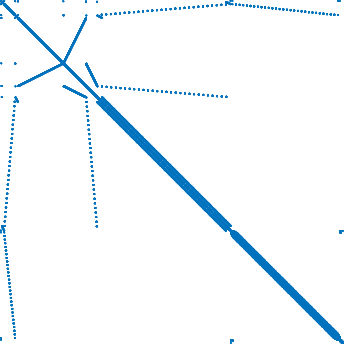};
\end{axis}
\end{tikzpicture}
    }
    \subfloat[ct20stif, $n = \pgfmathprintnumber{52329}$]{
\begin{tikzpicture}
    \begin{axis}[enlargelimits=false, axis on top, axis equal image, width=6.1cm,yticklabels={,,},xticklabels={,,}]
\addplot graphics [xmin=1,xmax=2,ymin=1,ymax=2] {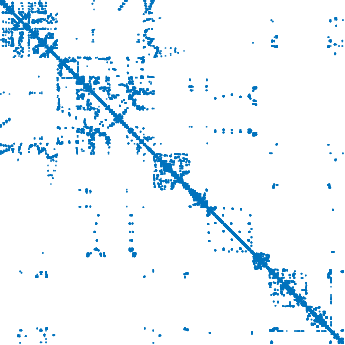};
\end{axis}
\end{tikzpicture}
    }
    \subfloat[finan512, $n = \pgfmathprintnumber{74752}$]{
\begin{tikzpicture}
    \begin{axis}[enlargelimits=false, axis on top, axis equal image, width=6.1cm,yticklabels={,,},xticklabels={,,}]
\addplot graphics [xmin=1,xmax=2,ymin=1,ymax=2] {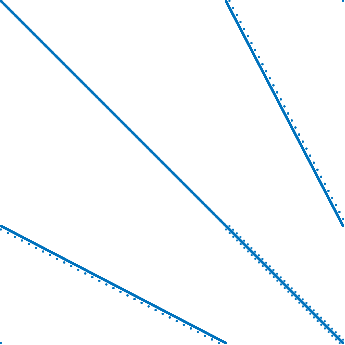};
\end{axis}
\end{tikzpicture}
    } \\
    \subfloat[consph, $n = \pgfmathprintnumber{83334}$]{
\begin{tikzpicture}
    \begin{axis}[enlargelimits=false, axis on top, axis equal image, width=6.1cm,yticklabels={,,},xticklabels={,,}]
\addplot graphics [xmin=1,xmax=2,ymin=1,ymax=2] {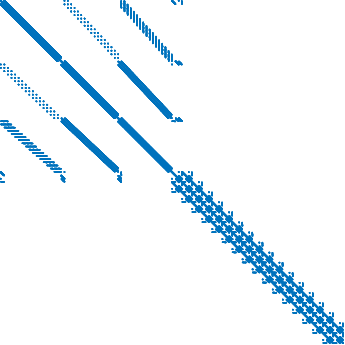};
\end{axis}
\end{tikzpicture}
    }
    \subfloat[G2\_circuit, $n = \pgfmathprintnumber{150102}$]{
\begin{tikzpicture}
    \begin{axis}[enlargelimits=false, axis on top, axis equal image, width=6.1cm,yticklabels={,,},xticklabels={,,}]
\addplot graphics [xmin=1,xmax=2,ymin=1,ymax=2] {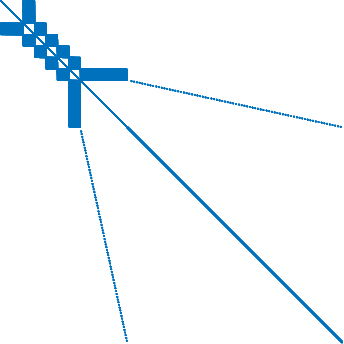};
\end{axis}
\end{tikzpicture}
    }
    \subfloat[offshore, $n = \pgfmathprintnumber{259789}$]{
\begin{tikzpicture}
    \begin{axis}[enlargelimits=false, axis on top, axis equal image, width=6.1cm,yticklabels={,,},xticklabels={,,}]
\addplot graphics [xmin=1,xmax=2,ymin=1,ymax=2] {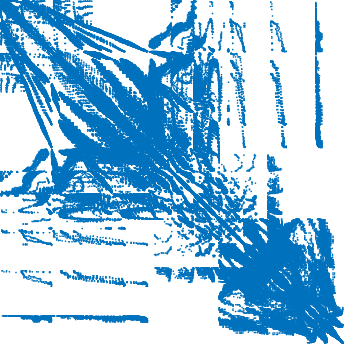};
\end{axis}
\end{tikzpicture}
    }
    \caption{Nonzero sparsity pattern of some of the test matrices from~\cref{tab:data_set}.\label{fig:pattern}}
\end{figure}
\subsection{The algebraic two-level case\label{sec:two}}
In this section, we present a numerical comparison between our proposed preconditioner and three algebraic multigrid solvers: AGMG, BoomerAMG, and GAMG.
Even though matrices from~\cref{tab:data_set} are SPD, all three AMG solvers encounter difficulties in solving many of the associated linear systems with random right-hand sides.
On the contrary, our algebraic two-level preconditioner is more robust and always reach the prescribed tolerance of $10^{-8}$. Note that a simple one-level preconditioner such as $\schwarz{RAS}$ with a minimal overlap of one does not converge for these problems. The outer Krylov method is the right-preconditioned GMRES(30)~\cite{SaaS86}. For preconditioners used within PETSc (all except AGMG), the systems are solved using 256 MPI processes and are first renumbered by ParMETIS~\cite{KarK98}. For our DD method, a single subdomain is mapped to each process, i.e., $N = 256$ in~\cref{eq:two_level_schwarz}. Furthermore, exact subdomain and second-level operator Cholesky factorizations are computed. In the last column of~\cref{tab:comparison}, the size of second-level is reported. One may notice that the grid complexities fluctuate among matrices. Indeed, for small-sized problem s3rmt3m3, the grid complexity is $\frac{\pgfmathprintnumber{5357}+\pgfmathprintnumber{5321}}{\pgfmathprintnumber{5357}}=1.99$, while for problem parabolic\_fem, it is $\frac{\pgfmathprintnumber{525825}+\pgfmathprintnumber{21736}}{\pgfmathprintnumber{525825}}=1.04$.
\begin{table}
  \caption{Preconditioner comparison:
    iteration counts are reported in the columns 2--5 if convergence to the
    prescribed tolerance of $10^{-8}$ is achieved in 100 iterations or less. In
    column 6, sizes of the second-level operator generated by our proposed
    preconditioner are reported.}
  \label{tab:comparison}
  \centering
\pgfplotstabletypeset[every head row/.style={before row=\hline,after row=\hline},
                      every last row/.style={after row=\hline},
                      every even row/.style={before row={\rowcolor[gray]{0.9}}},
                      columns/AGMG/.style={string replace={nan}{}},
                      columns/BoomerAMG/.style={string replace={nan}{}},
                      columns/GAMG/.style={string replace={nan}{}},
                      columns={identifier,AGMG,BoomerAMG,GAMG,HPDDM,c},sort,sort key=m,
                      columns/c/.style={int detect,column name=$n_C$,dec sep align},
                      columns/identifier/.style={string type},display columns/0/.style={column name=Identifier, column type={l|}}]\loadedtable
\end{table}

\subsection{The nested-level case}
Since our proposed preconditioner is fully algebraic, we now use it recursively
to solve the second-level operator from the previous section using yet another two-level
method instead of using an exact Cholesky factorization. This thus yields an algebraic
three-level preconditioner. HPDDM has the capability of automatically
redistributing coarse operators on a subset of MPI processes on which the
initial coefficient matrix $A$ is distributed~\cite{HPDDM}. We still use 256 MPI processes
for the fine-level decomposition, then use four processes for the second-level
decomposition, and the third-level operator is centralized on a single process.
The outer solver is now the flexible GMRES(30)~\cite{doi:10.1137/0914028}. Second-level systems are this
time solved with the right-preconditioned GMRES(30), with a higher tolerance
set to $10^{-4}$, compared to the outer-solver tolerance of $10^{-8}$. We
investigate problems s3rmt3m3 and parabolic\_fem which are the two extremes
from the previous section in terms of grid complexity. Iteration counts are
reported in~\cref{tab:ml}. One may notice that the
number of outer iterations is exactly the same as in the fifth column
of~\cref{tab:comparison}, meaning that the switch to an inexact second-level
solver does not hinder the overall convergence. Also, the number of inner
iterations is small, so our proposed preconditioner applied to the second-level
operator is indeed robust. Eventually, as we decrease the number of subdomains
for the second-level decomposition, the grid coarsening improves as well, especially
for small-sized problem s3rmt3m3.

In another context, we use our proposed preconditioner to solve coarse systems yield by
two other multilevel preconditioners.
The following three-dimensional problems are discretized by FreeFEM~\cite{Hec12} using
\pgfmathprintnumber{4096} MPI processes. First, we use GenEO~\cite{SpiDHNPS14} to
assemble a two-level analytic preconditioner for a scalar diffusion equation using order-two Lagrange
finite elements. The number of unknowns is \pgfmathprintnumber{4173281},
and the second-level operator generated by GenEO is of dimension $n_{C,2} =
\pgfmathprintnumber{60144}$. It is redistributed among 512 processes, and our
preconditioner constructs a third-level operator of dimension $n_{C,3} =
\pgfmathprintnumber{12040}$. Then, we use GAMG to assemble a four-level
quasi-algebraic (the near-nullspace is provided by the discretization kernel) preconditioner for the system of linear elasticity using order-two Lagrange finite elements.
The number of unknowns is \pgfmathprintnumber{30633603}. The coarse
operator from GAMG grid hierarchy is of dimension $n_{C,2} =
\pgfmathprintnumber{14880}$.
It is redistributed among 256 processes using the telescope infrastructure~\cite{10.1145/2929908.2929913}
and our preconditioner constructs a final-level
operator of dimension $n_{C,3} = \pgfmathprintnumber{5120}$. 
Unlike what is traditionally done with smoothed-aggregation AMG~\cite{vanek1992acceleration}, we do
not transfer explicitly the near-nullspace from GAMG coarse level for setting up our

preconditioner. These results are
gathered in~\cref{tab:ml_hybrid}. Again, one may notice that the
fast and accurate convergence of the inner solves (third column) does not
hinder the overall convergence (second column). For both the scalar diffusion
equation $\nabla \cdot \kappa \nabla$ and the system of linear elasticity,
highly heterogeneous material coefficients are used, see \cref{fig:kappa} and
\cref{fig:elasticity}, respectively.
\begin{figure}
\begin{center}
    \subfloat[Scalar diffusion in the unit cube with the coefficient $\kappa$ extruded in one dimension.\label{fig:kappa}]{
\raisebox{-0.2cm}{
    \fbox{\includegraphics[width=0.3\textwidth]{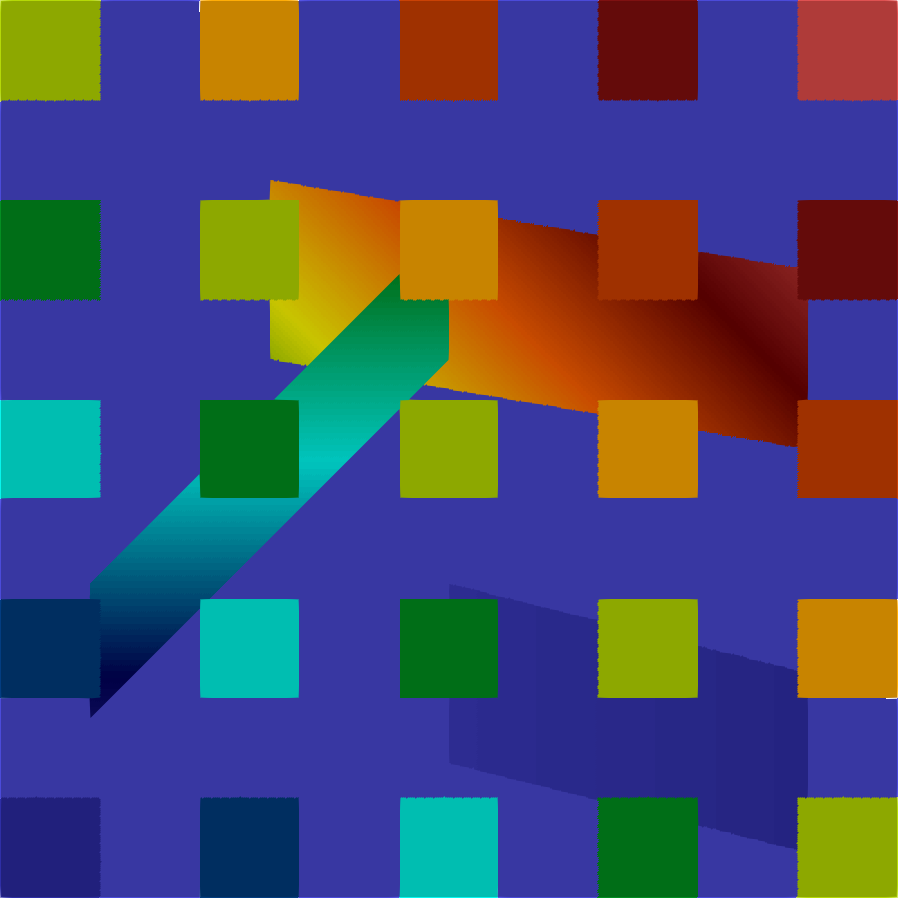}}
        \raisebox{1cm}{\scalebox{0.7}{
\begin{tikzpicture}
	  \begin{axis}[height=4.0cm,
	  enlargelimits=false,
	  colorbar,
	  colormap name=paraview,
	  point meta min=1,
	  point meta max=1.7e+6,
      hide axis,
      colorbar style={
          title={$\kappa$},
          scaled y ticks = false,
          ytick={1,5e+5,1e+6,1.7e+6}
      }
	  ]
	  \end{axis}
	\end{tikzpicture}
}}
    } 
    }\hfill
    \subfloat[Elongated (10$\times$ ratio) three-dimensional beam with Young's modulus ($E$) and Poisson's ratio ($\nu$) extruded in one dimension.\label{fig:elasticity}]{
        \shortstack{\fbox{\includegraphics[width=0.48\textwidth]{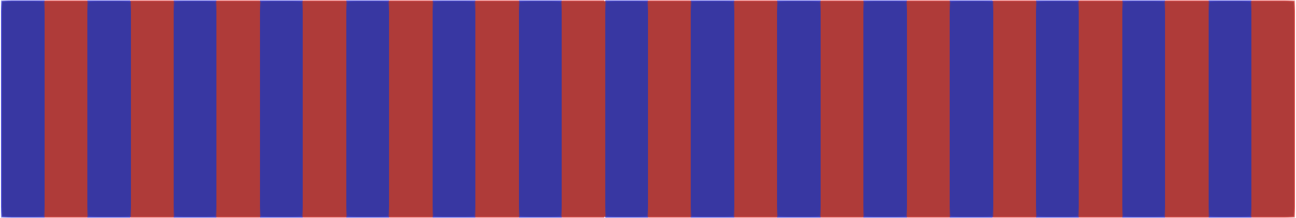}} \\
        \scalebox{0.7}{
\begin{tikzpicture}
	  \begin{axis}[height=4.0cm,
	  enlargelimits=false,
	  colorbar,
	  colormap name=parabin,
	  point meta min=0.01,
	  point meta max=200,
      hide axis,
      colorbar sampled,
      colormap access=piecewise constant,
      colorbar style={
          samples=3,
          title={$E$ (\si{\giga\pascal})},
          scaled y ticks = false,
          ytick={0.01,200}
      }
	  ]
	  \end{axis}
	\end{tikzpicture}
}
        \scalebox{0.7}{
\begin{tikzpicture}
	  \begin{axis}[height=4.0cm,
	  enlargelimits=false,
	  colorbar,
	  colormap name=parabin,
	  point meta min=0.25,
	  point meta max=0.45,
      hide axis,
      colorbar sampled,
      colormap access=piecewise constant,
      colorbar style={
          title={$\nu$},
          samples=3,
          scaled y ticks = false,
          ytick={0.25,0.45}
      }
	  ]
	  \end{axis}
	\end{tikzpicture}
}
    }
    }
    \caption{Variations of the material coefficients for problems from~\cref{tab:ml_hybrid}.\label{fig:coefficients}}
\end{center}
\end{figure}

\pgfplotstableread{table_ml.dat}\loadedtable
\begin{table}
    \caption{Algebraic multilevel preconditioner: {\it Outer iterations} is the FGMRES iteration count, {\it Inner iterations} is the average GMRES iteration count to solve coarse systems, $n$ is the size of the linear system, $n_{C,2}$ (resp. $n_{C,3}$) is the size of the second-level (resp. third-level) operator.}
  \label{tab:ml}
  \centering
\pgfplotstabletypeset[every nth row={2}{before row=\midrule},
                      every head row/.style={before row=\hline,after row=\hline},
                      every last row/.style={after row=\hline},
                      columns={identifier,outer,inner,m,m2,m3},sort,sort key=m,
                      columns/m/.style={int detect,column name=$n$,dec sep align},
                      columns/outer/.style={int detect,column name=\parbox[c]{2cm}{\small \centering~\vspace*{-2pt}\\ Outer \\ iterations\\[-7pt]~}},
                      columns/inner/.style={int detect,column name=\parbox[c]{2cm}{\small \centering~\vspace*{-2pt}\\ Inner \\ iterations\\[-7pt]~}},
                      columns/m2/.style={int detect,column name=$n_{C,2}$,dec sep align},
                      columns/m3/.style={int detect,column name=$n_{C,3}$,dec sep align},
                      row predicate/.code={%
                          \ifnum#1>4\relax
                          \pgfplotstableuserowfalse
                          \fi},
                      columns/identifier/.style={string type},display columns/0/.style={column name=Identifier, column type={l|}}]\loadedtable
\end{table}

\pgfplotstableread{table_ml_hybrid.dat}\loadedtable
\begin{table}
    \caption{Hybrid multilevel preconditioner: {\it Outer iterations} is the FGMRES iteration count, {\it Inner iterations} is the average GMRES iteration count to solve coarse systems, $n$ is the size of the linear system, $n_{C,2}$ is the size of the coarse-level operator assembled by either GenEO (for problem diffusion) or GAMG (for problem elasticity), $n_{C,3}$ is the size of the second-level operator assembled by our algebraic preconditioner to solve the aforementioned coarse systems.}
  \label{tab:ml_hybrid}
  \centering
\pgfplotstabletypeset[every nth row={2}{before row=\midrule},
                      every head row/.style={before row=\hline,after row=\hline},
                      every last row/.style={after row=\hline},
                      columns={identifier,outer,inner,m,m2,m3},sort,sort key=m,
                      columns/m/.style={int detect,column name=$n$,dec sep align},
                      columns/outer/.style={int detect,column name=\parbox[c]{2cm}{\small \centering~\vspace*{-2pt}\\ Outer \\ iterations\\[-7pt]~}},
                      columns/inner/.style={int detect,column name=\parbox[c]{2cm}{\small \centering~\vspace*{-2pt}\\ Inner \\ iterations\\[-7pt]~}},
                      columns/m2/.style={int detect,column name=$n_{C,2}$,dec sep align},
                      columns/m3/.style={int detect,column name=$n_{C,3}$,dec sep align},
                      row predicate/.code={%
                          \ifnum#1>4\relax
                          \pgfplotstableuserowfalse
                          \fi},
                      columns/identifier/.style={string type},display columns/0/.style={column name=Identifier, column type={l|}}]\loadedtable
\end{table}
Furthermore, as in~\cref{sec:two}, note that using a simple one-level preconditioner such as $\schwarz{RAS}$ with a minimal overlap of one for solving coarse systems from~\cref{tab:ml,tab:ml_hybrid} does not yield accurate enough inner solutions, thus preventing the outer solvers from converging. Coupling GAMG with our preconditioner is a good assessment of the composability of PETSc solvers~\cite{6341494}, for the interested reader, we provide next in~\cref{fig:options} the exact options used to setup such a multilevel solver.
\begin{figure}
\begin{minipage}[t]{0.37\textwidth}
\begin{Verbatim}[fontsize=\footnotesize,frame=single,framerule=0.1mm,commandchars=&\[\]]
 -ksp_type          fgmres
 -ksp_rtol          1.0e-8
 
 -pc_type           gamg
 -pc_gamg_threshold 0.01
 -pc_gamg_repartition
 -pc_mg_levels      4
 
 -prefix_push mg_coarse_
  -pc_type telescope
  -prefix_push pc_telescope_
   -reduction_factor 16
  -prefix_pop
 -prefix_pop
 &fvtextcolor[mygreen][# continue on the right column]
\end{Verbatim}
\end{minipage}
\begin{minipage}[t]{0.62\textwidth}
\begin{Verbatim}[fontsize=\footnotesize,frame=single,framerule=0.1mm,commandchars=\\\{\},codes={\catcode`$=3\catcode`^=7}]
 \fvtextcolor{mygreen}{# continued from the left column}
 -prefix_push mg_coarse_telescope_
  -ksp_converged_reason
  -ksp_type      gmres
  -ksp_pc_side   right
  -ksp_norm_type unpreconditioned
  -ksp_rtol      1.0e-4

  -pc_type       hpddm
  -prefix_push pc_hpddm_
   -define_subdomains
   -levels_1_pc_type     asm \fvtextcolor{mygreen}{# $\schwarz{RAS}$}
   -levels_1_sub_pc_type cholesky \fvtextcolor{mygreen}{# subdomain solvers}
   -levels_1_eps_nev     20 \fvtextcolor{mygreen}{# smallest $\lambda$ in \cref{eq:Zi}}
   -levels_1_st_type     mat \fvtextcolor{mygreen}{# $\underline{\widetilde{A}}\textsubscript{$ii$}^{\!\!-1}$ from \cref{eq:invAii}}
   -coarse_pc_type       cholesky \fvtextcolor{mygreen}{# coarse solver}
  -prefix_pop
 -prefix_pop
\end{Verbatim}
\end{minipage}
    \caption{PETSc command-line options for coupling GAMG and the proposed preconditioner.\label{fig:options}}
\end{figure}
\section{Conclusion}
\label{sec:conclusion}
We presented in this paper a fully algebraic and locally constructed multilevel overlapping Schwarz preconditioner that can
bound from above the condition number of the preconditioned matrix given a user-defined number.
The construction of the preconditioner relies on finding local SPSD splitting matrices of the matrix $A$.
Computing these splitting matrices involves the computation of the right singular vectors of the local
block row matrix which might be considered costly on the fine level. However, the locality of computations
and the robustness of the preconditioner provide a very powerful and scalable preconditioner that can be used as a
black-box solver especially when other black-box preconditioners fail to achieve a desired convergence rate.
Our implementation is readily available in the PETSc library.
Again, the proposed preconditioner is not meant to replace analytic multilevel preconditioners such as smoothed-aggregation algebraic multigrid
and GenEO. When these work, they will be more efficient algorithmically. However, employing the proposed preconditioner to solve the corresponding coarse problems proved to be effective
and efficient. As a future work, we would like to investigate less expensive constructions of SPSD matrices for
specific classes of SPD matrices that arise from the discretization of PDEs.

\appendix
\section*{Acknowledgments}
This work was granted access to the
GENCI-sponsored HPC resources of TGCC@CEA under allocation A0090607519. 
The authors would like to thank J.~E.~Roman for interesting discussions concerning the solution of~\cref{eq:Zi}.
%

\bibliographystyle{siamplain}
\bibliography{main}
\end{document}